\newtheorem{theorem}{Theorem}[section]
\theoremstyle{definition}
\theoremstyle{remark}
\newtheorem{remark}[theorem]{Remark}
\numberwithin{equation}{section}
\newcommand{\N}{\mathbb{N}}                        
\newcommand{\C}{\mathbb{C}}                        
\newcommand{\K}{\mathbb{K}}                        
\newcommand{\uk}{\underline{k}}                    
\newcommand{\vp}{\varphi}                          
\newcommand{\vpn}{\vp^{\{n\}}}                     
\newcommand{\Xpn}{X^{\{n\}}}                       
\newcommand{\pp}{\mathfrak{p}}                     
\newcommand{\qq}{\mathfrak{q}}                     
\newcommand{\sss}{\mathfrak{s}}                    
\newcommand{\kp}{\kappa}                           
\newcommand{\tensR}{\otimes_R}                     
\newcommand{\mm}{\mathfrak{m}}                     
\newcommand{\nn}{\mathfrak{n}}                     
\newcommand{\ma}{\mathfrak{a}}                     
\newcommand{\Ass}{\mathrm{Ass}}                    
\newcommand{\Spec}{\mathrm{Spec}\,}                
\newcommand{\trdeg}{\mathrm{tr.deg}}               
\begin{document}

\title[Auslander criterion for openness of an algebraic morphism]{Geometric Auslander criterion for openness\\ of an algebraic morphism}

\author{Janusz Adamus, Edward Bierstone and Pierre D. Milman}
\address{J. Adamus, Department of Mathematics, The University of Western Ontario, London, Ontario, Canada N6A 5B7
         - and - Institute of Mathematics, Faculty of Mathematics and Computer Science, Jagiellonian University, ul. {\L}ojasiewicza 6, 30-348 Krak{\'o}w, Poland}
\email{jadamus@uwo.ca}
\address{E. Bierstone, The Fields Institute, 222 College Street, Toronto, Ontario,
Canada M5T 3J1 - and - Department of Mathematics, University of Toronto, Toronto, 
Ontario, Canada M5S 2E4}
\email{bierston@fields.utoronto.ca}
\address{P.D. Milman, Department of Mathematics, University of Toronto, Toronto, 
Ontario, Canada M5S 2E4}
\email{milman@math.toronto.edu}
\thanks{Research partially supported by Natural Sciences and Engineering 
Research Council of Canada Discovery Grant OGP 355418-2008 and Polish Ministry of 
Science Discovery Grant N N201 540538 (Adamus), and NSERC Discovery Grants
OGP 0009070 (Bierstone), OGP 0008949 (Milman)}

\keywords{open, flat, torsion-free, fibred power, tensor product, vertical component}
\subjclass[2000]{Primary 14B25; Secondary 14Q20}

\begin{abstract}
We give a topological analogue for openness of a criterion for flatness 
that originates with Auslander.
Over a normal base of dimension $n$, failure of openness is detected by a \emph{vertical} component in the $n$'th fibred power of the morphism.
\end{abstract}

\maketitle

\section{Introduction}
\label{sec:intro}

The purpose of this note is to give a topological analogue for openness of a 
criterion for flatness that originates in a classical paper of 
Auslander~\cite{Au}.
Consider a morphism $\vp:X\to Y$ of schemes of finite type over a field, 
where $\vp$ is locally of finite type.
We show that, if $Y$ normal and of dimension $n$, then failure of openness of 
$\vp$ is equivalent to a severe discontinuity of the fibres --- the existence 
of an irreducible component of the source whose image is nowhere dense 
in $Y$ --- after passage to the $n$-fold fibred power of the mapping 
(Theorem \ref{thm:open-schemes}). The criterion is effective (see
Remark \ref{rem:effective}).

In the analogous criterion for (non-)flatness, one replaces the 
``existence of an irreducible component of the source'' with ``existence of 
an associated component of the source'', where the associated component can 
be isolated or embedded. Auslander's criterion is for flatness of finitely 
generated modules over a ring. The flatness criterion was recently extended 
to algebraic or analytic morphisms over the complex numbers, by the 
authors~\cite{ABM}, and then to algebraic morphisms over arbitrary fields, 
by Avramov and Iyengar~\cite{AI}. (See Section~\ref{sec:flatness}.) 

Let $Y$ be a normal scheme of finite type over a field $\uk$. (\emph{Normal} means that
every local ring of $Y$ is a normal ring; i.e., an integrally closed domain.)
Let $\vp: X\to Y$ be a morphism which is locally of finite type. 
We say that an associated component (isolated or embedded) of $X$ is \emph{vertical}
if its image lies in a proper subvariety of $Y$ (equivalently, is nowhere-dense in $Y$). 
If $Y=\Spec R$ and $X=\Spec A$, where $R$ is a normal algebra of finite
type over $\uk$ and $A$ is an $R$-algebra of finite type, and $\vp$ is the induced morphism,
then $X$ has a vertical associated component 
(resp. vertical irreducible component) over $Y$ if and only if
there is an associated prime (resp. minimal associated prime) 
$\pp$ in $\Spec A$ with $\pp\cap R\neq(0)$.
Equivalently, by the prime avoidance lemma \cite[Lemma\,3.3]{Eis}, 
$A$ (resp. $A_{\mathrm{red}}$) has a zero-divisor $r\in R$.
The following is our main result. (See \cite[Thm.\,2.2]{A1} for the 
complex analytic case.)

\begin{theorem}
\label{thm:open-schemes}
Let $Y$ be a scheme of finite type over a field $\uk$, 
and let $\vp:X\to Y$ denote a morphism which is locally of finite type. 
Assume that $Y$ is normal, of dimension $n$. Then $\vp$ is open
if and only if there are no vertical irreducible components in the $n$-fold 
fibred power $\vpn:\Xpn\to Y$.
\end{theorem}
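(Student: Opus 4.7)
The plan separates the two directions. For the forward direction, openness is preserved under base change and composition, so $\vpn$ is open. Since $Y$ is normal, each of its irreducible components is also a connected component, hence clopen. For any irreducible component $Z$ of $\Xpn$, the non-empty open $Z\setminus\bigcup_{V\neq Z}V$ has open image under $\vpn$; this image lies in the irreducible closed set $\overline{\vpn(Z)}$, which must therefore contain, and hence equal, an entire irreducible component of $Y$. So $Z$ is not vertical.

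For the reverse direction we argue the contrapositive: if $\vp$ is not open, then $\Xpn$ has a vertical irreducible component. Localizing, take $Y=\Spec R$ with $R$ a normal domain of dimension $n$. First, if $X$ has a vertical irreducible component $Z$ with $W:=\overline{\vp(Z)}\subsetneq Y$, then each irreducible component $C$ of $Z\times_Y\cdots\times_Y Z\subseteq\Xpn$ has coordinate-projections of its generic point all equal to $\eta_Z$, the unique generic of $Z$; since $\eta_Z$ lies on no other irreducible component of $X$, any strict enlargement of $C$ inside $\Xpn$ is forced to remain inside $Z\times_Y\cdots\times_Y Z$, contradicting maximality of $C$. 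Hence $C$ is itself a (vertical) component of $\Xpn$. Thus assume every irreducible component of $X$ dominates a component of $Y$, and work on one such component; let $d$ be its generic relative dimension. The key input is a classical openness criterion (Chevalley, cf.\ EGA IV): a morphism of finite type to a universally catenary base --- in particular $Y$ normal of finite type over $\uk$ --- is open iff equidimensional, i.e.\ $\dim_x X_{\vp(x)}=d$ for every $x\in X$. Non-openness therefore supplies $x\in X$ with $y:=\vp(x)$, $e:=\dim\overline{\{y\}}<n$, and $\dim_x X_y>d$. At the diagonal $p:=(x,\dots,x)\in\Xpn$ over $y$,
\[
\dim_p\Xpn_y \;=\; n\,\dim_x X_y \;\geq\; n(d+1) \;=\; n+nd,
\]
while every dominating component of $\Xpn$ has dimension at most $n+nd$. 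Suppose no component of $\Xpn$ through $p$ is vertical: each dominates $Y$, so $\dim_p\Xpn\leq n+nd$, and combined with $\dim_p\Xpn\geq\dim_p\Xpn_y\geq n+nd$ we get $\dim_p\Xpn=\dim_p\Xpn_y$. A system of parameters $t_1,\dots,t_{n-e}$ of $\mathcal{O}_{Y,y}$ then pulls back to cut out $\Xpn_y$ locally at $p$ without reducing local dimension, so Krull's Hauptidealsatz yields a minimal prime $\qq\subset\mathcal{O}_{\Xpn,p}$ containing $(t_1,\dots,t_{n-e})$ with $\mathcal{O}_{\Xpn,p}/\qq$ of maximal dimension. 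The corresponding irreducible component of $\Xpn$ passes through $p$, maps into $\overline{\{y\}}\subsetneq Y$, and is therefore vertical --- a contradiction.

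The main obstacle is the openness criterion (open iff equidimensional) for finite-type morphisms over a universally catenary base; this is where the normality hypothesis on $Y$ is used in an essential way. The rest --- dimension bookkeeping in $\Xpn$ and extraction of a vertical component via Krull's principal-ideal theorem --- is more routine.
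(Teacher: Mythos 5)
Your overall strategy closely mirrors the paper's: same forward direction via stability of openness under base change and composition, the same reduction to the case where every component of $X$ dominates $Y$, the same three ingredients (Chevalley semicontinuity, Nagata's altitude formula, Grothendieck's openness criterion for irreducible source over normal target), and the same final move of comparing the dimension of the fibre of $\Xpn$ at the diagonal point over $\eta$ with the dimension of dominating components. The difference is in how the two arguments bound the dimension of a dominating component of $\Xpn$: the paper constructs the closed set $F^\vp_m(X)$ and its proper image $Y'\subsetneq Y$, so that over $Y\setminus Y'$ the fibres of $\vp$ have dimension at most $m-1$, giving $\trdeg_{K(R)}K(W)\leq n(m-1)$ and then $\dim W\leq nm$; you instead bound $\dim W\leq n+nd$ directly by the generic fibre dimension. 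Both work, but yours has gaps in the execution.

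Three concrete problems. First, you ``work on one such component; let $d$ be its generic relative dimension,'' but the inequality ``every dominating component of $\Xpn$ has dimension at most $n+nd$'' requires $d$ to be the \emph{maximum} generic fibre dimension over \emph{all} components of $X$ that dominate (the generic fibre of $\Xpn\to Y$ is $(X_\eta)^{\{n\}}$, whose dimension is $n\max_j m_j$, not $nm_{j_0}$ for a single fixed $j_0$). If instead you intend to replace $X$ by a single component $Z$ and work inside $Z^{\{n\}}$, you must then argue that a vertical component of $Z^{\{n\}}$ is still a component of $\Xpn$; your generic-point argument in the preliminary reduction handles this only when the component in question has all projections of its generic point equal to $\eta_Z$, which needs to be justified (e.g.\ via going-down for the flat projections of $Z^{\{n\}}_\eta$). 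Second, your citation of the openness criterion is off: Chevalley's criterion for ``open iff equidimensional and dominating'' requires $Y$ to be \emph{geometrically unibranch} (in particular normal), not merely universally catenary; the paper cites \cite[Prop.\,5.2.1 and Cor.\,14.4.6]{Gro} with the normality hypothesis. Universal catenarity is what makes the altitude formula an equality, a separate role. Third, the Hauptidealsatz step is not quite right as written: a minimal prime $\qq\subset\mathcal{O}_{\Xpn,p}$ over $(t_1,\dots,t_{n-e})$ corresponds to an irreducible component of the \emph{fibre} $\Xpn_y$ near $p$, not \emph{a priori} of $\Xpn$. To conclude that $V(\qq)$ is a full component of $\Xpn$ you need the dimension equality to force $\qq$ to be a minimal prime of $\mathcal{O}_{\Xpn,p}$, which involves translating between $\dim_p\Xpn$ (a scheme-theoretic local dimension) and $\dim\mathcal{O}_{\Xpn,p}$ (a Krull dimension); making this precise shows, in fact, that the inequalities force $e=0$ (i.e.\ $y$ closed) and $m=d+1$. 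None of this is in your sketch. A cleaner finish, avoiding Hauptidealsatz entirely: once $\dim_p\Xpn_y=\dim_p\Xpn$, take a component $C$ of $\Xpn_y$ through $p$ of maximal dimension; $C$ is irreducible in $\Xpn$, hence contained in some component $W\ni p$, and $\dim C=\dim_p\Xpn\geq\dim W\geq\dim C$ forces $C=W$, so $W\subset\Xpn_y$ maps into $\overline{\{y\}}$, contradicting that $W$ dominates.
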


Openness is a local property; i.e., a mapping $\vp:X\to Y$ is open
if and only if it is open in a neighbourhood of $\xi$, for every $\xi\in X$.
Theorem~\ref{thm:open-schemes} therefore reduces to the following affine criterion.

\begin{theorem}
\label{thm:open-main}
Let $R$ be an algebra of finite type over a field $\uk$, and let $A$ be an $R$-algebra of finite type.
Assume that $R$ is normal, of dimension $n$.
Then the induced morphism of spectra $\Spec A\to\Spec R$ is open 
if and only if the reduced $n$-fold tensor power
$\left(A^{\tensR^n}\right)_{\mathrm{red}}$ is a torsion-free $R$-module.
(Equivalently, $\pp\cap R=(0)$ for every minimal prime $\pp$ in $A^{\tensR^n}$.)
\end{theorem}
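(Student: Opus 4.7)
For the forward direction, the plan is to use that openness is preserved under base change and composition of morphisms locally of finite type; iterating these two stability properties shows that openness of $\vp:\Spec A\to\Spec R$ implies openness of the fibred power $\vpn:\Xpn\to\Spec R$. Since $R$ is a finite direct product of normal domains (one per connected component of $\Spec R$), I may reduce to the case where $R$ is a normal domain. Each of the finitely many irreducible components of the Noetherian scheme $\Xpn$ has a non-empty open interior (its complement in itself being the union of the other components), which is mapped by the open $\vpn$ to a non-empty open --- hence dense, as $\Spec R$ is irreducible --- subset of $\Spec R$. This gives $\pp\cap R=(0)$ for every minimal prime $\pp$ of $A^{\tensR^n}$; the injection $(A^{\tensR^n})_{\mathrm{red}}\hookrightarrow\prod_\pp A^{\tensR^n}/\pp$ into a product of $R$-torsion-free domains then shows that $(A^{\tensR^n})_{\mathrm{red}}$ is torsion-free over $R$.

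For the reverse direction I would argue by contraposition: assuming $\vp$ is not open, produce a vertical minimal component in $\Xpn$. Non-openness being local on the base, I would localize at a prime $\qq\in\Spec R$ at which the image of $\vp$ fails to be a neighborhood of $\qq$, reducing to $R$ local and normal of dimension $\leq n$. Replacing $A$ by $A_{\mathrm{red}}$ is also harmless: $X_{\mathrm{red}}\to X$ is a universal homeomorphism, so the induced map $(X_{\mathrm{red}})^{\{n\}}\to\Xpn$ is too, and thus the irreducible components (equivalently, minimal primes) match and openness of $\vp$ and of $\vp_{\mathrm{red}}$ coincide. At this stage I would invoke the Auslander-type flatness criterion of \cite{ABM,AI}: if $A^{\tensR^n}$ were torsion-free over $R$, then $A$ would be $R$-flat, so $\vp$ would be open (flat of finite type is open), a contradiction. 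Thus some associated prime of $A^{\tensR^n}$ meets $R$ non-trivially.

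The principal obstacle is refining this conclusion from an associated vertical prime to a \emph{minimal} vertical prime of $A^{\tensR^n}$. Only minimal primes correspond to irreducible components of $\Xpn$ and survive reduction, while embedded primes correspond to torsion that dies in $(A^{\tensR^n})_{\mathrm{red}}$ --- exactly the gap between flatness and openness. Having reduced to $A=A_{\mathrm{red}}$, every associated prime of $A$ itself is minimal, but this reducedness need not persist through the $n$-fold tensor product (in positive characteristic $A\tensR A$ can acquire embedded nilpotents over imperfect residue fields), so the vertical associated prime provided by \cite{ABM,AI} could a priori be embedded. I expect the crux to lie in using the normality of $R$ --- perhaps via a going-down argument from $R$ into the irreducible components of $\Xpn$, or via a strengthening of the flatness criterion valid under the reducedness hypothesis on $A$ --- to ensure that any embedded vertical prime of $A^{\tensR^n}$ contains a minimal vertical prime.
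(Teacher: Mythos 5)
Your forward direction is essentially the paper's: openness implies universal openness, hence openness of $\vpn$, hence dominance of every irreducible component. That part is fine.

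For the reverse direction, however, the route through the flatness criterion cannot work, and not only for the reason you flag. You correctly identify the associated-versus-minimal gap as the crux, but there is no repair available along the lines you sketch, and there are two further obstructions you do not mention. First, Theorem~\ref{thm:flat-main} (and the results of \cite{ABM,AI} it rests on) requires $R$ \emph{regular} and, in the form stated here, characteristic zero, whereas Theorem~\ref{thm:open-main} assumes only that $R$ is normal over an arbitrary field. A normal finite-type $\uk$-algebra of dimension $n\geq 2$ is generally not regular, and no localization trick fixes this (localizing at $\qq$ also destroys finite-type-over-a-field, which the flatness criterion needs). Second, even granting regularity and characteristic zero, the contrapositive of the flatness criterion gives you a vertical \emph{associated} prime of $A^{\tensR^n}$, which may be embedded. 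Embedded vertical primes are exactly the phenomenon that distinguishes non-flatness from non-openness (a flat map is open, but an open map need not be flat), so one cannot expect to promote an embedded vertical prime to a minimal one in general; indeed, for an open non-flat $\vp$ the embedded vertical associated primes are there and the minimal ones are not. Your suggested going-down or ``strengthened flatness criterion under reducedness'' would have to rule out precisely the open non-flat case, which is circular.

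The paper's actual proof of the reverse direction avoids flatness entirely and argues directly on dimensions. Assuming $\vp$ is not open at $\xi$, one may assume each irreducible component $Z_j$ through $\xi$ dominates $Y$ (else $X$ already has a vertical component, which propagates to $\Xpn$ diagonally). Let $m=\dim_\xi\vp^{-1}(\vp(\xi))$. By Grothendieck's characterization of openness over a normal base (dominant with equidimensional fibres of constant dimension), non-openness forces the locus $F^\vp_m(Z_j)$ where the fibre dimension equals $m$ to be a proper closed subset of $Z_j$. Chevalley's semicontinuity plus Nagata's altitude formula (with equality, since finite-type algebras over a field are universally catenary) give $\dim\vp(F^\vp_m(Z_j))\leq n-1$, so the $m$-dimensional fibres occur only over a proper closed $Y'\subset Y$. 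Now take an irreducible component $W$ of $\Xpn$ containing an $nm$-dimensional component of the fibre over $\eta=\vp(\xi)$. If $W$ dominated $Y$, its generic fibre (over $Y\setminus Y'$) would have dimension at most $n(m-1)$, and then the altitude formula bounds $\dim W\leq n + n(m-1) = nm$. Since $W$ contains an $nm$-dimensional irreducible closed subset (a component of the fibre over $\eta$) and is itself irreducible of dimension $\leq nm$, $W$ must coincide with that fibre component, contradicting dominance. This directly produces a \emph{minimal} vertical prime of $A^{\tensR^n}$ with no appeal to flatness.
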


\begin{remark}
\label{rem:effective}
By Theorem~\ref{thm:open-main}, to verify non-openness of the
morphism $\Spec A\to\Spec R$,
it suffices to find a minimal associated prime in $A^{\tensR^n}$ which contains a nonzero element $r\in R$.
Similarly, by Theorem~\ref{thm:flat-main} (below), in order to verify that a finite $A$-module $F$ is not $R$-flat, it is enough to find an associated prime of 
$F^{\tensR^n}$ in $A^{\tensR^n}$ which contains a nonzero element $r\in R$. 
Thus Theorems~\ref{thm:open-main} and~\ref{thm:flat-main} together with Gr\"obner-basis
algorithms for primary decomposition (see, e.g., \cite{V2}) provide tools
for checking openness and flatness by effective computation.
\end{remark}

\section{Criterion for flatness}
\label{sec:flatness}

Theorem~\ref{thm:open-main} is a topological analogue of the criterion for flatness mentioned above.
In contrast to the general openness criterion, however, the flatness results are known for a smooth target only. On the other hand, flatness makes sense also for modules over a morphism, and the corresponding flatness criterion holds also in this context.

The flatness criterion (Theorem \ref{thm:flat-main} following) has its origin 
in Auslander~\cite{Au}. It was recently proved by the authors for complex 
analytic morphisms and, as a consequence, for morphisms of schemes of finite 
type over $\C$, using transcendental methods \cite{ABM} (see also \cite{GK}), 
and more recently for schemes of finite type over an arbitrary field, 
by Avramov and Iyengar \cite{AI} using homological methods (which require 
a smooth base ring).  The flatness criterion over any field of characteristic 
zero can, in fact, be deduced from the complex case, using the 
Tarski-Lefschetz Principle. In other words, the following is a corollary 
of \cite[Thm.\,1.3]{ABM}.

\begin{theorem}
\label{thm:flat-main}
Let $R$ be an algebra of finite type over a field $\uk$ of characteristic zero. 
Let $A$ denote an $R$-algebra essentially of finite type, and let $F$ denote a finitely generated $A$-module.
Assume that $R$ is regular, of dimension $n$.
Then $F$ is $R$-flat if and only if the $n$-fold tensor power $F^{\tensR^n}$ is a torsion-free $R$-module.
(Equivalently, $\pp\cap R=(0)$ for every associated prime $\pp$ of $F^{\tensR^n}$ in $A^{\tensR^n}$.)
\end{theorem}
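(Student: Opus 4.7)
The plan is to treat the two directions separately.

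The ``only if'' direction is direct: if $F$ is $R$-flat, then iterating that tensor products of flat modules are flat shows $F^{\tensR^n}$ is $R$-flat. Since $R$ is regular, each localization at a maximal ideal is a regular local ring, hence a domain; consequently $R$ decomposes as a finite direct product of regular integral domains. Over such a product, torsion-freeness is a factor-wise condition, and a flat module over a domain is torsion-free, so $F^{\tensR^n}$ is torsion-free.

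For the converse, the plan is to invoke the Tarski--Lefschetz principle to reduce to the established complex case \cite[Thm.\,1.3]{ABM}. First, passage from $\uk$ to its algebraic closure $\overline{\uk}$ by faithfully flat base change preserves all hypotheses (regularity, dimension, essential finite type, finite generation) as well as both flatness of $F$ over $R$ and torsion-freeness of $F^{\tensR^n}$ (the latter using that $R$ is geometrically reduced over an algebraically closed field of characteristic zero). We may therefore assume $\uk$ is algebraically closed of characteristic zero.

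Having fixed finite presentations of $R$, $A$, and $F$, the hypothesis ``$F^{\tensR^n}$ is $R$-torsion-free'' and the conclusion ``$F$ is $R$-flat'' each become first-order statements in the language of rings, with the coefficients of the defining polynomials as parameters. The Tarski--Lefschetz principle transfers any such sentence between algebraically closed fields of characteristic zero, so the biconditional over $\uk$ is equivalent to the biconditional over $\C$, which is \cite[Thm.\,1.3]{ABM}.

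The main obstacle is the first-order encoding of both predicates. Torsion-freeness of the finitely generated module $F^{\tensR^n}$ is straightforward, as it simply negates an existential polynomial condition on a witness pair $(r,\xi)$. Flatness is more delicate because it involves, a priori, infinitely many ideals of $R$; the standard remedy is to produce a finite free resolution of $F$ by Gr\"obner-basis-style syzygy computations and rephrase flatness as vanishing of finitely many $\mathrm{Tor}_1^R$-modules, each the homology of an explicit finite complex of free $R$-modules. Once flatness is expressed syzygy-theoretically, its first-order character is manifest, and the Tarski--Lefschetz transfer applies.
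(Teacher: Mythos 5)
Your overall strategy — base change to the algebraic closure, then invoke the Tarski--Lefschetz principle to reduce to the complex case of~\cite{ABM} — is the same as the paper's, and the ``only if'' direction is fine (if a bit more elaborate than necessary). There are, however, two places where your execution diverges from, and is weaker than, the paper's, and one of them is a genuine gap.

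First, for the descent from $\uk$ to $\overline{\uk}$, the paper argues in the \emph{contrapositive}: assuming $F$ is not $R$-flat, it follows from faithful flatness of $R' := R\otimes_\uk\K$ that $F':=F\otimes_R R'$ is not $R'$-flat; the algebraically closed case then produces a zero-divisor of $R'$ on $F'^{\otimes^n_{R'}}\cong (F^{\tensR^n})\otimes_R R'$; finally the Bourbaki formula $\Ass_{R'}(M\otimes_R R')=\bigcup_{\pp\in\Ass_R(M)}\Ass_{R'}(R'/\pp R')$ pushes the non-zero associated prime back down to a non-zero associated prime of $F^{\tensR^n}$ in $R$. You instead go \emph{forward}: you assert that torsion-freeness of $F^{\tensR^n}$ \emph{ascends} to $R'$, appealing to geometric reducedness. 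That assertion is true, but your proposed justification is too thin: one still needs the Bourbaki formula (or an equivalent) to see that $\Ass_{R'}$ of the base-changed module sits over $\Ass_R(F^{\tensR^n})$, plus the fact that $R'$ is reduced and that minimal primes map to minimal primes under the flat integral extension $R\to R'$. The paper's contrapositive is strictly simpler because it only needs the ``descent of bad behaviour'' direction of Bourbaki's formula, not a characterization of $\Ass_{R'}(R')$. Your route is workable but you should supply the citation and the reducedness argument you gesture at. (You also take the faithful flatness of $R'$ over $R$ for granted; the paper spends a paragraph proving it, because $\K/\uk$ can be an infinite algebraic extension and the claim is not immediate from closed-point lifting alone.)

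Second, and this is the real gap, your proposed first-order encoding of flatness does not work as stated. You suggest taking ``a finite free resolution of $F$'' and rephrasing flatness as vanishing of finitely many $\mathrm{Tor}_1^R$ modules. But $F$ is finitely generated only over $A$, not over $R$; it has no finite free resolution over $R$, and resolving it over $A$ produces $\mathrm{Tor}^A$, which says nothing directly about $R$-flatness. Moreover ``$\mathrm{Tor}_1^R(F,-)=0$'' is a condition against infinitely many test modules, and it is not clear which ``finitely many'' would suffice. The paper sidesteps this by invoking the \emph{equational} criterion of flatness~\cite[Cor.\,6.5]{Eis}, which is the correct handle for turning flatness of a finitely presented object over an affine base into finitely many polynomial identities after Gr\"obner-basis elimination. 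As written, your Tor-based encoding is an incorrect reduction, and this step of your argument would need to be replaced by the equational criterion (or by an explicit appeal to the effective flattening algorithms à la Vasconcelos) before the Tarski--Lefschetz transfer can be applied.
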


An $R$-algebra \emph{essentially of finite type} means a localization of an $R$-algebra of finite type.
In the case that $\uk=\C$, the above result is \cite[Thm.\,1.3]{ABM}.
\medskip

The ``only if'' direction in Theorem~\ref{thm:flat-main} is immediate, since any tensor power
of a flat $R$-module is $R$-flat, and hence a torsion-free $R$-module,
by the characterization of flatness in terms of relations (\cite[Cor.\,6.5]{Eis}).
To deduce the ``if'' direction of Theorem~\ref{thm:flat-main} from~\cite[Thm.\,1.3]{ABM}, we proceed in two steps.
\par

First, assume that $\uk$ is algebraically closed.
Then our result follows from \cite[Thm.\,1.3]{ABM}, by the Tarski-Lefschetz Principle (see, e.g., \cite{Se}),
as flatness can be expressed in terms of a finite number of relations (\cite[Cor.\,6.5]{Eis}).
\par

Next, suppose that $\uk$ is an arbitrary field of characteristic zero,
and let $\K$ denote an algebraic closure of $\uk$. Let $A$ be an $R$-algebra essentially of finite type,
and let $F$ be a finitely generated $A$-module, which is not flat over $R$.
Put $R':=R\otimes_{\uk}\K$. Then $R'$ is a faithfully flat $R$-module.
Indeed, $R'$ is $R$-flat, because flatness is preserved by any base change and $\K$ is trivially
$\uk$-flat, as a $\uk$-vector space.
To prove faithful flatness, by \cite[Ch.\,I, \S\,3.5, Prop.\,9]{Bou},
it suffices to show that for every maximal ideal $\mm$ of $R$
there exists a maximal ideal $\nn$ of $R'$ such that $\nn\cap R=\mm$.
Let $\mm$ be an arbitrary maximal ideal in $R$. Then $\mm$ induces a proper ideal in $R'$. Let $\nn$ be a maximal ideal of $R'$
which contains the ideal induced by $\mm$ in $R'$. Since $R'$ is a homomorphic image of a polynomial
algebra $\K[y_1,\dots,y_n]$, with $\K$ algebraically closed, it follows that $\nn$ is
(the equivalence class of) an ideal of the form $(y_1-a_1,\dots,y_n-a_n)$ for some
$(a_1,\dots,a_n)\in\K^n$. Then clearly $\nn\cap R\subsetneq R$, and hence $\nn\cap R=\mm$, 
since
$\nn\cap R\supset\mm$ and $\mm$ is maximal.
\par
Put $A':=A\otimes_RR'$, and $F':=F\otimes_RR'$. Since $F$ is not $R$-flat, 
$F'$ is not $R'$-flat, by faithful flatness of $R'$ over $R$. Hence $F'^{\otimes^n_{R'}}$ has
a zero-divisor in $R'$, by the first part of the proof.
Therefore $F'^{\otimes^n_{R'}}$ has a non-zero associated prime $\qq$ in $R'$.
But $F'^{\otimes^n_{R'}} \cong (F^{\tensR^n})\otimes_RR'$, and for any $R$-module $M$ we have
\[
\Ass_{R'}(M\otimes_RR')\ =\ \bigcup_{\pp\in\Ass_R(M)}\Ass_{R'}(R'/\pp R')\,,
\]
by \cite[Ch.\,IV, \S\,2.6, Thm.2]{Bou}. Thus $F^{\tensR^n}$ has a non-zero associated prime in $R$;
i.e., $F^{\tensR^n}$ has a zero-divisor in $R$.
\qed

\section{Zero-divisors in tensor powers and variation of fibre dimension}
\label{sec:non-open}

Let $\vp:X\to Y$ denote a morphism of schemes of finite type over an arbitrary field $\uk$. 
We show that nontrivial variation of the dimension of the fibres of $\vp$ implies the
existence of a vertical component in a sufficiently high fibred 
power of $\vp$ (i.e., the existence of an irreducible component of a fibred 
power of $\vp$ which is mapped into a proper closed subscheme of $Y$).
We will use the following two classical results.
\smallskip

(1) Chevalley's theorem on upper semicontinuity of fibre dimension 
\cite[Thm.\,13.1.3]{Gro}: If $\vp:X\to Y$ is locally of finite type and 
$e \in \N$, then $F^{\vp}_e(X) := \{x\in X:\, \dim_x\vp^{-1}(\vp(x))\geq e\}$
is closed.
\smallskip

(2) The \emph{altitude formula} of Nagata: Suppose that $R$ is a Noetherian 
integral domain and that $A$ is an integral domain and an $R$-algebra of finite type. 
If $\pp$ is a prime ideal in $A$ and $\qq=\pp\cap R$, then
\begin{equation}
\label{eqn:Nagata}
\dim{A_{\pp}}\ \leq\ \dim{R_{\qq}}+\trdeg_{K(R)}K(A)-\trdeg_{\kp(\qq)}\kp(\pp)\,,
\end{equation}
with equality if $R$ is universally catenary. (See \cite[Thm.\,15.6]{M} or 
\cite[Thm.\,13.8]{Eis}.)

Here $\kp(\qq)$ and $\kp(\pp)$ denote the residue fields $R_{\qq}/\qq R_{\qq}$ 
and $A_{\pp}/\pp A_{\pp}$ (respectively), $K(R)$ and $K(A)$ are the fields of 
fractions of $R$ and $A$ (respectively), and $\trdeg_KL$ denotes the 
transcendence degree of a field extension $K\subset L$. A ring $R$ is called 
\emph{catenary} if, for every pair of prime ideals $\qq_1\subset\qq_2$ in $R$, 
all maximal chains of primes between $\qq_1$ and $\qq_2$ have the same length. 
$R$ is called \emph{universally catenary} if every finitely generated 
$R$-algebra is catenary.
\smallskip

Equality in (\ref{eqn:Nagata}) holds, for example, under the hypotheses of
Theorems~\ref{thm:open-schemes} and~\ref{thm:open-main} --- every algebra
of finite type over a field is universally catenary \cite[Cor.\,13.4]{Eis}.

Recall finally that if $Y$ is a normal scheme of finite type over a field and $X$
is irreducible, then a morphism $\vp:X\to Y$ which is locally of finite type is open
if and only if $\vp$ is dominating and the fibres of $\vp$ are equidimensional
and of constant dimension (by Grothendieck \cite[Prop.\,5.2.1 and Cor.\,14.4.6]{Gro}).

\begin{proof}[Proof of Theorem~\ref{thm:open-main}]
Let $R$ be an algebra of finite type over a field $\uk$, and let $A$ be a finite type $R$-algebra.
Assume that $R$ is normal, of dimension $n$. Let $X:=\Spec A$, $Y:=\Spec R$,
and let $\vp:X\to Y$ be the canonical morphism induced by the $R$-algebra structure of $A$.
\par

First suppose that $\vp$ is open. Then $\vp$ is \emph{universally open} (see 
\cite[D\'ef.\,14.3.3]{Gro}), by \cite[Cor.\,14.4.3]{Gro}. Thus
$X\times_YX\to X$ is open, so that $\vp^{\{2\}}:X\times_YX\to Y$ is open since
it is the composite of the open mappings $X\times_YX\to X$ and $X\to Y$.
By induction, all fibred powers $\vp^{\{k\}}:X^{\{k\}}\to Y$, $k\geq1$ are open. 
In particular, $\vpn|_W$ is dominating, for every isolated irreducible component $W$ of 
$\Xpn$; i.e., there are no vertical components.

Now suppose that $\vp$ is not open; hence not open at some point $\xi\in X$.
Let $\{Z_j\}$ denote the set of isolated irreducible components of $X$ at $\xi$.
We can assume that each $\vp|_{Z_j}$ is dominating. (Otherwise, already $X$
would have an isolated vertical component at $\xi$. Then, for each $k$, the fibred power $X^{\{k\}}$ would have an isolated vertical component at the corresponding diagonal point, since $X$ embeds into $X^{\{k\}}$ as the diagonal. More precisely, if $Z_j$ is an isolated vertical component of $X$, then $Z_j^{\{k\}}$ is an isolated vertical component of $X^{\{k\}}$.)

Since $\vp$ is not open at $\xi$, $\vp|_{Z_j}$ is not open at $\xi$, for each $j$.
Let $m:=\dim_{\xi}\vp^{-1}(\vp(\xi))$. By Chevalley's theorem,
$m$ is the maximal fibre dimension in some open neighbourhood $U$ of $\xi$.
We can assume that $U=X$ (since our problem is local). 

Let $F^{\vp}_m(X) := \{x\in X:\, \dim_x\vp^{-1}(\vp(x))=m\}$.
For each $j$, set
\[
F^{\vp}_m(Z_j):=\{x\in Z_j:\dim_x(\vp|_{Z_j})^{-1}(\vp(x))=m\}\,.
\]
Clearly, $F^{\vp}_m(X)=\bigcup_j F^{\vp}_m(Z_j)$. By Grothendieck's criterion
mentioned before the proof, each $F^{\vp}_m(Z_j)$ is a proper closed subset of $Z_j$ 
(perhaps empty). For each $j$, put
\[
d_j:=\dim{Z_j}\,, \quad m_j:=\min_{x\in Z_j}\dim_x(\vp|_{Z_j})^{-1}(\vp(x))\,.
\]
Then $\dim F^{\vp}_m(Z_j)\leq d_j-1$, since $Z_j$ is irreducible.

For each $j$, $d_j=\dim Y+m_j=n+m_j$, by equality in (\ref{eqn:Nagata}). 
If $F^{\vp}_m(Z_j) \neq \emptyset$, then $\vp|_{F^{\vp}_m(Z_j)}$ has
generic fibre dimension $m$ and it follows that
\[
\dim\vp(F^{\vp}_m(Z_j))\ \leq\ (d_j-1)-m\ =\ (n+m_j-1)-m\ \leq\ n-1\,,
\]
by equality in (\ref{eqn:Nagata}) again. Therefore,
$\vp(F^{\vp}_m(X))=\bigcup_j\vp(F^{\vp}_m(Z_j))$ is a constructible set (\cite[Ch.\,IV, Thm.\,1.8.4]{Gro}) of 
dimension at most $n-1$, so it lies in a proper closed subset $Y'$ of $Y$.

Let $\eta = \vp(\xi)$. Let $W$ be an irreducible component of the $n$-fold
fibred power $\Xpn$ which contains an $nm$-dimensional irreducible component 
of the fibre $(\vp^{\{n\}})^{-1}(\eta)$. Let $\sss$ be the (minimal) prime ideal in $A^{\tensR^n}$ 
which defines $W$ (i.e., $W=\Spec(A^{\tensR^n}/\sss)$). We claim that $\sss\cap R\neq(0)$
(i.e., $W$ is mapped by $\vpn$ into a proper subvariety of $Y$).

Suppose that $\sss\cap R=(0)$ (that is, suppose $W$ is dominant).
Then $A^{\tensR^n}/\sss\supset R$, and $A^{\tensR^n}/\sss\tensR K(R)$
is an algebra of finite type over $K(R)$. Hence
\[
\trdeg_{K(R)}K(A^{\tensR^n}/\sss) = \dim A^{\tensR^n}/\sss\tensR K(R)\,.
\]
The latter is the common length of all maximal chains of prime ideals in 
$A^{\tensR^n}/\sss\tensR K(R)$~\cite[\S\,8.2, Thm.\,A]{Eis}. On the other hand, 
the fibres of $\vpn|_W$ over the (nonempty) open set $Y\setminus Y'$ 
are of dimension at most $n(m-1)$; hence, the preceding common length of maximal 
chains of primes is at most $n(m-1)$. Therefore,
\begin{equation}
\label{eqn:trdeg}
\trdeg_{K(R)}K(A^{\tensR^n}/\sss)\leq n(m-1)\,.
\end{equation}
By (\ref{eqn:Nagata}) and (\ref{eqn:trdeg}), for every maximal ideal 
$\ma$ in $A^{\tensR^n}/\sss$, we have
\begin{multline}
\notag
\dim(A^{\tensR^n}/\sss)_{\ma}\ \leq\ \dim R_{\ma\cap R}+\trdeg_{K(R)}K(A^{\tensR^n}/\sss)\\
\leq\ \dim R\,+\,n(m-1)\ =\ nm\,.
\end{multline}
Hence $\dim W=\dim(A^{\tensR^n}/\sss)\leq nm$. Since $nm = \dim (\vpn)^{-1}(\eta)$ and
$W$ is irreducible, it follows that the generic points of $W$ and $(\vpn)^{-1}(\eta)$ coincide.
Therefore $\vpn(W)=\{\eta\}$; a contradiction.
\end{proof}

\bibliographystyle{amsplain}

\end{document}